\documentclass[12pt,reqno]{amsart}
\usepackage{etex,mathtools}
\usepackage{amscd,amssymb,amsmath,multicol,tikz,float}

\usepackage[left=1in,right=1in]{geometry}
%\usepackage[all]{xy}
%\input pictex.tex
%\restylefloat{table}
\begin{document}

\restylefloat{table}
\newtheorem{thm}[equation]{Theorem}
\numberwithin{equation}{section}
\newtheorem{cor}[equation]{Corollary}
\newtheorem{expl}[equation]{Example}
\newtheorem{rmk}[equation]{Remark}
\newtheorem{conv}[equation]{Convention}
\newtheorem{claim}[equation]{Claim}
\newtheorem{lem}[equation]{Lemma}
\newtheorem{sublem}[equation]{Sublemma}
\newtheorem{conj}[equation]{Conjecture}
\newtheorem{defin}[equation]{Definition}
\newtheorem{diag}[equation]{Diagram}
\newtheorem{prop}[equation]{Proposition}
\newtheorem{notation}[equation]{Notation}
\newtheorem{tab}[equation]{Table}
\newtheorem{fig}[equation]{Figure}
\newcounter{bean}
\renewcommand{\theequation}{\thesection.\arabic{equation}}

\raggedbottom \voffset=-.7truein \hoffset=0truein \vsize=8truein
\hsize=6truein \textheight=8truein \textwidth=6truein
\baselineskip=18truept

\def\mapright#1{\ \smash{\mathop{\longrightarrow}\limits^{#1}}\ }
\def\mapleft#1{\smash{\mathop{\longleftarrow}\limits^{#1}}}
\def\mapup#1{\Big\uparrow\rlap{$\vcenter {\hbox {$#1$}}$}}
\def\mapdown#1{\Big\downarrow\rlap{$\vcenter {\hbox {$\ssize{#1}$}}$}}
\def\mapne#1{\nearrow\rlap{$\vcenter {\hbox {$#1$}}$}}
\def\mapse#1{\searrow\rlap{$\vcenter {\hbox {$\ssize{#1}$}}$}}
\def\mapr#1{\smash{\mathop{\rightarrow}\limits^{#1}}}
\def\ss{\smallskip}
\def\s{\sigma}
\def\l{\lambda}
\def\vp{v_1^{-1}\pi}
\def\at{{\widetilde\alpha}}

\def\sm{\wedge}
\def\la{\langle}
\def\ra{\rangle}
\def\lar{\leftarrow}
\def\ev{\text{ev}}
\def\od{\text{od}}
\def\on{\operatorname}
\def\ol#1{\overline{#1}{}}
\def\spin{\on{Spin}}
\def\cat{\on{cat}}
\def\Lbar{\overline{\Lambda}}
\def\qed{\quad\rule{8pt}{8pt}\bigskip}
\def\ssize{\scriptstyle}
\def\a{\alpha}
\def\bz{{\Bbb Z}}
\def\Rhat{\hat{R}}
\def\im{\on{im}}
\def\ct{\widetilde{C}}
\def\ext{\on{Ext}}
\def\sq{\on{Sq}}
\def\eps{\epsilon}
\def\ar#1{\stackrel {#1}{\rightarrow}}
\def\br{{\bold R}}
\def\bC{{\bold C}}
\def\bA{{\bold A}}
\def\bB{{\bold B}}
\def\bD{{\bold D}}
\def\bC{{\bold C}}
\def\bh{{\bold H}}
\def\bQ{{\bold Q}}
\def\bP{{\bold P}}
\def\bx{{\bold x}}
\def\bo{{\bold{bo}}}
\def\dh{\widehat{d}}
\def\A{\mathcal{A}}
\def\B{\mathcal{B}}
\def\si{\sigma}
\def\Vbar{{\overline V}}
\def\dbar{{\overline d}}
\def\wbar{{\overline w}}
\def\Sum{\sum}
\def\tfrac{\textstyle\frac}

\def\tb{\textstyle\binom}
\def\Si{\Sigma}
\def\w{\wedge}
\def\equ{\begin{equation}}
\def\b{\beta}
\def\G{\Gamma}
\def\L{\Lambda}
\def\g{\gamma}
\def\d{\delta}
\def\k{\kappa}
\def\psit{\widetilde{\Psi}}
\def\tht{\widetilde{\Theta}}
\def\psiu{{\underline{\Psi}}}
\def\thu{{\underline{\Theta}}}
\def\aee{A_{\text{ee}}}
\def\aeo{A_{\text{eo}}}
\def\aoo{A_{\text{oo}}}
\def\aoe{A_{\text{oe}}}
\def\vbar{{\overline v}}
\def\endeq{\end{equation}}
\def\sn{S^{2n+1}}
\def\zp{\bold Z_p}
\def\cR{{\mathcal R}}
\def\P{{\mathcal P}}
\def\cQ{{\mathcal Q}}
\def\cj{{\cal J}}
\def\zt{{\bold Z}_2}
\def\bs{{\bold s}}
\def\bof{{\bold f}}
\def\bq{{\bold Q}}
\def\be{{\bold e}}
\def\Hom{\on{Hom}}
\def\ker{\on{ker}}
\def\kot{\widetilde{KO}}
\def\coker{\on{coker}}
\def\da{\downarrow}
\def\colim{\operatornamewithlimits{colim}}
\def\zphat{\bz_2^\wedge}
\def\io{\iota}
\def\om{\omega}
\def\Prod{\prod}
\def\e{{\cal E}}
\def\zlt{\Z_{(2)}}
\def\exp{\on{exp}}
\def\abar{{\overline a}}
\def\xbar{{\overline x}}
\def\ybar{{\overline y}}
\def\zbar{{\overline z}}
\def\mbar{{\overline m}}
\def\nbar{{\overline n}}
\def\sbar{{\overline s}}
\def\kbar{{\overline k}}
\def\bbar{{\overline b}}
\def\et{{\widetilde E}}
\def\ni{\noindent}
\def\tsum{\textstyle \sum}
\def\coef{\on{coef}}
\def\den{\on{den}}
\def\lcm{\on{l.c.m.}}
\def\Ext{\operatorname{Ext}}
\def\iso{\approx}
\def\lra{\longrightarrow}
\def\vi{v_1^{-1}}
\def\ot{\otimes}
\def\psibar{{\overline\psi}}
\def\thbar{{\overline\theta}}
\def\mhat{{\hat m}}
\def\exc{\on{exc}}
\def\ms{\medskip}
\def\ehat{{\hat e}}
\def\etao{{\eta_{\text{od}}}}
\def\etae{{\eta_{\text{ev}}}}
\def\dirlim{\operatornamewithlimits{dirlim}}
\def\gt{\widetilde{L}}
\def\lt{\widetilde{\lambda}}
\def\st{\widetilde{s}}
\def\ft{\widetilde{f}}
\def\sgd{\on{sgd}}
\def\lfl{\lfloor}
\def\rfl{\rfloor}
\def\ord{\on{ord}}
\def\gd{{\on{gd}}}
\def\rk{{{\on{rk}}_2}}
\def\nbar{{\overline{n}}}
\def\MC{\on{MC}}
\def\lg{{\on{lg}}}
\def\cH{\mathcal{H}}
\def\cS{\mathcal{S}}
\def\cP{\mathcal{P}}
\def\N{{\Bbb N}}
\def\Z{{\Bbb Z}}
\def\Q{{\Bbb Q}}
\def\R{{\Bbb R}}
\def\C{{\Bbb C}}
\def\Lb{\overline\Lambda}
\def\mo{\on{mod}}
\def\xt{\times}
\def\notimm{\not\subseteq}
\def\Remark{\noindent{\it  Remark}}
\def\kut{\widetilde{KU}}
\def\Eb{\overline E}
\def\*#1{\mathbf{#1}}
\def\0{$\*0$}
\def\1{$\*1$}
\def\22{$(\*2,\*2)$}
\def\33{$(\*3,\*3)$}
\def\ss{\smallskip}
\def\ssum{\sum\limits}
\def\dsum{\displaystyle\sum}
\def\la{\langle}
\def\ra{\rangle}
\def\on{\operatorname}
\def\proj{\on{proj}}
\def\od{\text{od}}
\def\ev{\text{ev}}
\def\o{\on{o}}
\def\U{\on{U}}
\def\lg{\on{lg}}
\def\a{\alpha}
\def\bz{{\Bbb Z}}
\def\ccM{{\Bbb M}}
\def\E{\mathcal{E}}
\def\eps{\varepsilon}
\def\bc{{\bold C}}
\def\bN{{\bold N}}
\def\bB{{\bold B}}
\def\bW{{\bold W}}
\def\nut{\widetilde{\nu}}
\def\tfrac{\textstyle\frac}
\def\b{\beta}
\def\G{\Gamma}
\def\g{\gamma}
\def\zt{{\Bbb Z}_2}
\def\zth{{\bold Z}_2^\wedge}
\def\bs{{\bold s}}
\def\bx{{\bold x}}
\def\bof{{\bold f}}
\def\bq{{\bold Q}}
\def\be{{\bold e}}
\def\lline{\rule{.6in}{.6pt}}
\def\xb{{\overline x}}
\def\xbar{{\overline x}}
\def\ybar{{\overline y}}
\def\zbar{{\overline z}}
\def\ebar{{\overline \be}}
\def\nbar{{\overline n}}
\def\ubar{{\overline u}}
\def\bbar{{\overline b}}
\def\et{{\widetilde e}}
\def\M{\mathcal{M}}
\def\lf{\lfloor}
\def\rf{\rfloor}
\def\ni{\noindent}
\def\ms{\medskip}
\def\Dhat{{\widehat D}}
\def\what{{\widehat w}}
\def\Yhat{{\widehat Y}}
\def\abar{{\overline{a}}}
\def\minp{\min\nolimits'}
\def\sb{{$\ssize\bullet$}}
\def\mul{\on{mul}}
\def\N{{\Bbb N}}
\def\Z{{\Bbb Z}}
\def\S{\Sigma}
\def\Q{{\Bbb Q}}
\def\R{{\Bbb R}}
\def\C{{\Bbb C}}
\def\Xb{\overline{X}}
\def\eb{\overline{e}}
\def\notint{\cancel\cap}
\def\cS{\mathcal S}
\def\cR{\mathcal R}
\def\el{\ell}
\def\TC{\on{TC}}
\def\GC{\on{GC}}
\def\wgt{\on{wgt}}
\def\Ht{\widetilde{H}}
\def\wbar{\overline w}
\def\dstyle{\displaystyle}
\def\Sq{\on{sq}}
\def\Om{\Omega}
\def\ds{\dstyle}
\def\tz{tikzpicture}
\def\zcl{\on{zcl}}
\def\bd{\bold{d}}
\def\cM{\mathcal{M}}
\def\io{\iota}
\def\Vb#1{{\overline{V_{#1}}}}
\def\Ebar{\overline{E}}
\def\lb{\,\begin{picture}(-1,1)(1,-1)\circle*{3.5}\end{picture}\ }
\def\rlb{\,\begin{picture}(-1,1)(1,-1) \circle*{4.5}\end{picture}\ }
\def\lbb{\,\begin{picture}(-1,1)(1,-1)\circle*{8}\end{picture}\ }
\def\zp{\Z_p}

\title[The connective $KO$-theory of $K(\zt,2)$, I: the $E_2$ page]
{The connective $KO$-theory of the Eilenberg-MacLane space $K(\zt,2)$, I: the $E_2$ page}
\author{Donald M. Davis}
\address{Department of Mathematics, Lehigh University\\Bethlehem, PA 18015, USA}
\email{dmd1@lehigh.edu}
\author{W. Stephen Wilson}
\address{Department of Mathematics, Johns Hopkins University\\Baltimore, MD 01220, USA}
\email{wswilsonmath@gmail.com}
\date{October 30, 2024}
\keywords{Adams spectral sequence, Steenrod algebra, connective $KO$-theory, Eilenberg-MacLane space}
\thanks {2000 {\it Mathematics Subject Classification}: 55S10, 55T15, 55N20, 55N15.}

\maketitle
\begin{abstract} We compute the $E_2$ page of the Adams spectral sequence converging to the connective $KO$-theory of the second mod 2 Eilenberg-MacLane space, $ko_*(K(\Z/2,2))$. This required a careful analysis of the structure of $H^*(K(\Z/2,2);\zt)$ as a module over the subalgebra of the Steenrod algebra generated by $\sq^1$ and $\sq^2$. Complete analysis of the spectral sequence will be performed in a subsequent paper.\end{abstract}

\section{Introduction}
Let $\zt=\Z/2$ and let $K_2$ denote the Eilenberg-MacLane space $K(\zt,2)$.
In \cite{DW}, the authors gave a complete determination of the connective complex $K$-theory groups $ku_*(K_2)$ and $ku^*(K_2)$. The original motivation for this work was from \cite{W} and \cite{DWSW},  which studied Stiefel-Whitney classes of Spin manifolds. Because of the relationship (\cite{ABP}) of the Spin cobordism spectrum and the spectrum $ko$ for connective real $K$-theory, information about $ko_*(K(\zt,n))$ gave useful results about Spin manifolds. For complete calculations the authors were led to the more tractable $ku$ groups. In this paper, we return to the $ko$ groups. 

We give a complete determination of the $E_2$ page of the Adams spectral sequences (ASS) converging to  $ko_*(K_2)$ and $ko^*(K_2)$. In a subsequent paper, we will complete the calculation by determining the differentials and extensions in the spectral sequences. We choose to split this $E_2$ work off because we feel that it involves some clever arguments that we would not want to have obscured in a paper with massive ASS charts.

Most of our focus will be on the homology groups $ko_*(K_2)$, in part because of its connection with the motivating problem and in part because its ASS is of a more familiar form than that for $ko^*(-)$. In \cite{DW}, most of the work was done for the cohomology groups $ku^*(K_2)$, largely because of the product structure. That structure, along with a comparison with the mod-$p$ groups $k(1)^*(K_2)$, enabled us to find the differentials in the spectral sequence for $ku^*(K_2)$, and we can use that information to deduce differentials in the other spectral sequences. Similarly to the situation for $ku$ in \cite{DW}, the $ko$-homology and $ko$-cohomology groups of $K_2$ are Pontryagin duals of one another. We discuss this in Section \ref{cohsec}.

Let $A_1$ denote the subalgebra of the mod-2 Steenrod algebra generated by $\sq^1$ and $\sq^2$, and let $E_1$ denote the exterior subalgebra generated by the Milnor primitives $Q_0=\sq^1$ and $Q_1=\sq^1\sq^2+\sq^2\sq^1$. The ASS converging to $ko_*(X)$ has $E_2^{s,t}=\ext^{s,t}_{A_1}(H^*X,\zt)$, while that for $ku_*(X)$ has $E_2=\ext_{E_1}(H^*X,\zt)$. All cohomology groups have coefficients in $\zt$. The first step toward $ku^*(K_2)$ was finding a splitting of $H^*K_2$ as a direct sum of reduced $E_1$-modules (\cite[Proposition 2.11 and (2.16)]{DW}). (A {\it reduced} module is one containing no free submodules.) In Section  \ref{extsec}, we describe a corresponding splitting as $A_1$-modules (Theorem \ref{splt}) and the groups $\ext_{A_1}(-,\zt)$ for all of the summands. This then will be the $E_2$ page of the ASS, the main result of this paper.

\section{The $A_1$-summands $M_k$}\label{Mksec}
An important part of the $E_1$ splitting of $H^*K_2$ was a family of $E_1$-modules $M_k$ for $k\ge4$ (\cite[(2.13), (2.14), (2.15)]{DW}). In this section, we find  corresponding $A_1$-modules, which we also call $M_k$. Although the structure of these $A_1$-modules as $E_1$-modules is very similar to that of the corresponding $E_1$-modules of \cite{DW} (in fact isomorphic if $k\equiv0,1$ mod 4), finding classes with the correct $\sq^2$ behavior was a nontrivial task.

Let $u_0$ denote the nonzero element of $H^2(K_2)$, and define $u_j$ inductively by $u_{j+1}=\sq^{2^j}u_j$.
Then $H^*(K_2)=\zt[u_j: j\ge0]$ with $|u_j|=2^j+1$. Let $S=(\sq^1,\sq^2)$. One easily checks that $$S(u_j)=\begin{cases}(u_1,u_0^2)&j=0\\ (0,u_2)&j=1\\ (u_{j-1}^2,0)&j\ge2.\end{cases}$$
 In Lemma \ref{sq2thm} we replace $u_j$ with generators $x_j$ for $j\ge4$ with similar properties except that $\sq^2\sq^1(x_4)=0$.
 \begin{lem}\label{sq2thm} There are elements $x_j\in H^{2^j+1}(K_2)$ for $j\ge4$ satisfying
 \begin{enumerate}
 \item $x_j\equiv u_j$ mod decomposables,
 \item $\sq^1(x_4)=c_{18}\ne0$, $\sq^2(c_{18})=0$, $\sq^2(x_4)=0$, and
 \item $S(x_j)=(x_{j-1}^2,0)$ for $j\ge5$.
 \end{enumerate}
 \end{lem}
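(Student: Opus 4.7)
I would build the $x_j$ by induction on $j$, treating $j=4$ as the hard base case and $j\ge 5$ inductively via the operation $\sq^{2^{j-1}}$.

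\textbf{Base case ($j=4$).} Write $x_4 = u_4 + D_4$ with $D_4 \in H^{17}(K_2)$ a decomposable class. Using $\sq^1 u_4 = u_3^2$, $\sq^2 u_4 = 0$, and $\sq^2(u_3^2) = (\sq^1 u_3)^2 = u_2^4$ (by Cartan), conditions (1)--(2) reduce to the two simultaneous requirements
$$\sq^2(D_4) = 0 \qquad \text{and} \qquad \sq^2\sq^1(D_4) = u_2^4,$$
after which $c_{18} := \sq^1 x_4 = u_3^2 + \sq^1 D_4$ automatically satisfies $\sq^2 c_{18} = 0$. I would find $D_4$ by enumerating the finite basis of decomposable monomials of degree $17$ in $\zt[u_0,u_1,u_2,u_3]$, computing $\sq^1$ and $\sq^2$ on each via Cartan and the given $S(u_j)$, and solving the resulting linear system over $\zt$. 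Nonvanishing of $c_{18}$ is then read off from its explicit expression as a polynomial in the $u_j$.

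\textbf{Inductive step ($j\ge 5$).} Set $x_j := \sq^{2^{j-1}}(x_{j-1})$. Since $\sq^{2^{j-1}} u_{j-1} = u_j$ and the Cartan formula preserves the decomposable ideal, $x_j \equiv u_j$ modulo decomposables, giving (1). Using the Adem relation $\sq^1 \sq^{2^{j-1}} = \sq^{2^{j-1}+1}$ (valid for $j\ge 2$, since $\binom{2^{j-1}-1}{1} = 2^{j-1}-1$ is odd) and $|x_{j-1}| = 2^{j-1}+1$, I get $\sq^1 x_j = \sq^{2^{j-1}+1} x_{j-1} = x_{j-1}^2$. For $\sq^2$, the Adem relation $\sq^2 \sq^{2^{j-1}} = \sq^{2^{j-1}+2} + \sq^{2^{j-1}+1}\sq^1$ (valid for $j\ge 3$) yields $\sq^2 x_j = \sq^{2^{j-1}+1}(\sq^1 x_{j-1})$, the first summand vanishing by degree. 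For $j\ge 6$, $\sq^1 x_{j-1} = x_{j-2}^2$ is a square, and a Cartan expansion shows only the symmetric pair of indices $\{2^{j-2}, 2^{j-2}+1\}$ contributes, yielding $2\,x_{j-1}x_{j-2}^2 = 0$. The case $j=5$ reduces to checking $\sq^{17}(c_{18}) = 0$; if this fails for the chosen $D_4$, one either refines $D_4$ or corrects $x_5$ by a decomposable $E_5 \in \ker\sq^1 \cap H^{33}$ with $\sq^2 E_5 = \sq^{17}(c_{18})$.

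\textbf{Main obstacle.} The nontrivial step is finding $D_4$. The conditions $\sq^2 D_4 = 0$ and $\sq^2\sq^1 D_4 = u_2^4$ are tight, and naive candidates fail: for instance $D_4 = u_3 u_0^4$ satisfies $\sq^2 D_4 = 0$ but gives $\sq^2\sq^1 D_4 = u_0^4 u_1^4 \neq u_2^4$, while $D_4 = u_1^4 u_2$ satisfies $\sq^2 D_4 = 0$ but has $\sq^2\sq^1 D_4 = 0$. Identifying the correct linear combination of degree-$17$ decomposable monomials requires a careful analysis of the interplay between $\sq^1$ and $\sq^2$ on products of the $u_j$; this is what the authors refer to as a "nontrivial task." Once $D_4$ is in hand, the inductive step for $j\ge 5$ is essentially formal, modulo a possible ad hoc check at $j=5$.
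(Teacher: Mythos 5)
Your route is genuinely different from the paper's. The paper first changes variables from $u_j$ to intermediate generators $w_j$ (chosen so that $S(w_j)=(w_{j-1}^2,0)$ already for $j\ge3$), and then gives a closed-form polynomial in the $w_j$ for each $x_j$, deferring verification to ``one can check.'' You instead propose to find $x_4$ by linear algebra and then propagate inductively by $x_j=\sq^{2^{j-1}}x_{j-1}$. Your reduction of the base case to $\sq^2 D_4=0$ and $\sq^2\sq^1 D_4=u_2^4$ is correct, and your Adem/Cartan argument for $j\ge 6$ is sound (though the phrase ``$2\,x_{j-1}x_{j-2}^2$'' is imprecise: the two surviving Cartan terms are $\sq^{2^{j-2}}(x_{j-2})\cdot x_{j-2}^2$ counted twice, or more simply $\sq^{\text{odd}}$ of a perfect square vanishes). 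The inductive step buys structural clarity; the paper's explicit formulas are better suited to the later explicit-basis computations quoted after Figure~\ref{Mkpic}.

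The one place you hedge, $j=5$, can in fact be closed cleanly, and you should do so rather than leave it conditional. Take
$$x_5:=\sq^{16}(x_4)+\sq^{15}\sq^1(x_4)=\sq^{16}(x_4)+\sq^{15}(c_{18}).$$
Since $\sq^1\sq^{15}=0$ (odd case of the Adem relation), $\sq^1 x_5=\sq^1\sq^{16}x_4=\sq^{17}x_4=x_4^2$. For $\sq^2$, the Adem relations give $\sq^2\sq^{16}=\sq^{18}+\sq^{17}\sq^1$ and $\sq^2\sq^{15}=\sq^{17}+\sq^{16}\sq^1$; using $|x_4|=17$ and $\sq^1 c_{18}=0$, both summands of $\sq^2 x_5$ equal $\sq^{17}(c_{18})$ and cancel. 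Finally $c_{18}=u_3^2+\sq^1 D_4$ is decomposable, so $\sq^{15}(c_{18})$ is decomposable and $x_5\equiv u_5$ modulo decomposables. With this, no case analysis on whether $\sq^{17}(c_{18})$ happens to vanish is needed. As written, your proposal still defers the existence of $D_4$ to an unexecuted linear-algebra search, which is comparable in rigor to the paper's own ``one can check,'' but if you intend this as a standalone proof you should exhibit one solution; the paper's $D_4 = u_0^4u_1^3+u_0^3u_1^2u_2+u_0^2u_1u_2^2+u_0u_2^3+u_1^4u_2+u_0^4u_3$ (the decomposable part of $w_0w_2^3+w_4$) is one such.
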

 \begin{proof} We first introduce an intermediate set of generators $w_j$ defined by
 $$w_j=\begin{cases}u_j&j=0,1\\
 u_0u_1+u_2&j=2\\
 u_1^{2^{j-2}}u_{j-2}+u_0^{2^{j-2}} u_{j-1}+u_j&j\ge3.\end{cases}$$
 These satisfy
 $$S(w_j)=\begin{cases}(w_1,w_0^2)&j=0\\
 (0,w_2+w_0w_1)&j=1\\
 (0,w_0w_2)&j=2\\
 (w_{j-1}^2,0)&j\ge3.\end{cases}$$
 Now we define $x_4=w_0w_2^3+w_4$ and, for $j\ge5$
 $$x_j=w_0^{2^{j-4}}w_2^{2^{j-3}}w_{j-2}+w_1^{3\cdot 2^{j-5}}w_2^{2^{j-4}}w_3^{2^{j-5}}w_{j-3}+w_0^{2^{j-5}}w_1^{2^{j-4}}w_2^{2^{j-3}}w_{j-3}+w_j.$$
 One can check that these satisfy the claims of the lemma.\end{proof}

 \begin{thm}\label{Mkthm} For $k\ge4$ there are $Q_0$-free $A_1$-submodules $M_k\subset H^*(K_2)$ with
$$H_*(M_k;Q_1)=\begin{cases}\langle c_{18},x_4\rangle&k=4\\
\langle x_{k-1}^2,c_{18}x_4\prod_{t=4}^{k-2}x_t^2\rangle&k\ge5.\end{cases}$$
The $A_1$-module $\Sigma^{-2^k}M_k$ has the form in Figure \ref{Mkpic}.
\end{thm}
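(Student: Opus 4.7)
My plan is to exhibit $M_k$ explicitly as a finite-dimensional $\zt$-subspace of $H^*(K_2)$ spanned by specific monomials in $c_{18}=\sq^1 x_4$ and the $x_j$ of Lemma~\ref{sq2thm}, then to verify (a) closure under $\sq^1$ and $\sq^2$ (so $M_k$ is an $A_1$-submodule), (b) $Q_0$-freeness, and (c) the stated $Q_1$-homology. Each verification reduces to Lemma~\ref{sq2thm} and repeated applications of the Cartan formula.

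The base cases are quick. For $k=4$, take $M_4=\zt\{x_4,c_{18}\}$: Lemma~\ref{sq2thm}(2) gives $\sq^1 x_4=c_{18}$ and makes all other $\sq^1,\sq^2$ actions vanish; the pair $(x_4,c_{18})$ gives $Q_0$-freeness; and both basis elements are $Q_1$-cycles with no $Q_1$-boundary. For $k=5$ one verifies that
\[
M_5 = \zt\{x_5,\, x_4^2,\, c_{18}x_4,\, c_{18}^2\}
\]
is $A_1$-closed with nontrivial actions $\sq^1 x_5=x_4^2$, $\sq^1(c_{18}x_4)=c_{18}^2$, and $\sq^2(x_4^2)=c_{18}^2$ (the last by Cartan and $\sq^1 x_4=c_{18}$); the pairs $(x_5,x_4^2)$ and $(c_{18}x_4,c_{18}^2)$ give $Q_0$-freeness; and $Q_1(x_5)=c_{18}^2$ is the only nontrivial $Q_1$-image. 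For $k\ge 6$ the construction follows the same pattern: build $M_k$ around two spines---a top spine beginning $x_k\xrightarrow{\sq^1}x_{k-1}^2\xrightarrow{\sq^2}x_{k-2}^4$ (the second step by Cartan and Lemma~\ref{sq2thm}(3)) and a bottom spine starting at $c_{18}x_4\prod_{t=4}^{k-2}x_t^2$---and include bridging monomials so that every $\sq^1$-cycle is a $\sq^1$-image (for example, $x_5 x_4^2$ is an $\sq^1$-preimage of $x_4^4$ when $k=6$). Once $A_1$-closure holds, the $Q_1$-homology computation is mechanical: $Q_1$ is a derivation with $Q_1(a^2)=0$, so the nontrivial $Q_1$-images arise only from the $\sq^2\sq^1$ steps $Q_1(x_k)=x_{k-2}^4$ and analogous steps inside the bottom spine, leaving exactly the two classes $x_{k-1}^2$ and $c_{18}x_4\prod_{t=4}^{k-2}x_t^2$.

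The principal obstacle is (a) combined with (b) for $k\ge 6$: the basis must be chosen tightly enough to give only a two-dimensional $Q_1$-homology and loosely enough to absorb all Cartan cross-terms, while also ensuring that no ``accidental'' $\sq^1$-cycle arising as a linear combination---such as $x_5c_{18}^2+c_{18}x_4^3$ when $k=6$, since both $\sq^1(x_5c_{18}^2)$ and $\sq^1(c_{18}x_4^3)$ equal $c_{18}^2 x_4^2$---is left without an explicit $\sq^1$-preimage. Organizing the basis of $M_k$ so that these three demands are met simultaneously, and matching the result against the pattern of Figure~\ref{Mkpic}, is the technical heart of the proof.
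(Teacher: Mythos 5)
Your approach---constructing an explicit monomial (or near-monomial) basis for each $M_k$ and verifying the $A_1$-action, $Q_0$-freeness, and $Q_1$-homology by hand---is genuinely different from the paper's argument, and it does work for the base cases $k=4,5$. However, it leaves a real gap for general $k$, and the paper itself provides evidence that the gap is not easily closed. After Theorem~\ref{Mkthm}, the authors note that they ``laboriously found explicit bases for $M_k$ for $k\le9$'' before finding the proof they present, and they exhibit the $M_7$ basis: its bottom row consists of elements such as $0311+1031+1102+0240$ and $0151+1013+0222+0080$ (in their exponent shorthand), i.e., four- and five-term sums of monomials with no evident closed-form pattern. Your ``two spines plus bridging monomials'' picture does flag that such linear combinations are needed, but gives no mechanism for producing them systematically, and the recursion in $k$ quickly becomes unmanageable. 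Nor does your outline contain a device for proving that, once you have a candidate $Q_0$-free reduced module with the correct two-dimensional $Q_1$-homology, it must have the specific shape in Figure~\ref{Mkpic} (one of four types, depending on $k\bmod4$).

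The paper sidesteps the explicit-basis problem entirely. It introduces the larger $A_1$-submodule $\ccM_k\subset H^*(K_2)$ spanned by \emph{all} monomials $\prod_{j\ge3}x_j^{e_j}$ with $\sum e_j 2^j=2^k$ (writing $c_{18}=x_3^2$), proves $\ccM_k$ is $Q_0$-free and computes $H_*(\ccM_k;Q_1)$ by organizing the monomials into small free-over-$Q_0$ (resp.\ free-over-$Q_1$) summands. Margolis's structure theory (\cite[Prop.~13.13]{Mar}) then gives a well-defined reduced summand $M_k$ with $\ccM_k\cong M_k\oplus(\text{free})$, so $M_k$ exists as a submodule without ever being written down. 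To pin down its isomorphism type they pass to a dual, extended module and invoke Yu's Theorem (\cite[Thm.~7.1]{Br}), which classifies reduced $Q_0$-free $A_1$-modules with one-dimensional $Q_1$-homology into four families; the correct family is then selected by combining the grading of the top $Q_1$-homology class (mod~4) with a dimension count mod~8, the latter supplied by Churchhouse's congruence for the number of binary partitions of $2^{k-3}$. If you want to rescue your constructive approach, you would need to supply both a closed-form basis for all $k$ (including the multi-term bottom-row elements) and an independent argument determining the module's shape; as it stands, the technical heart you identify is exactly the part the paper's authors found intractable and engineered around.
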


Here and throughout $\langle s_1,\ldots,s_k\rangle$ denotes the span (resp. graded span) of elements in a vector space (resp. graded vector space).
We depict $A_1$-modules with straight segments showing $\sq^1$, and curved segments $\sq^2$.
We circle the $Q_1$-homology classes.

\bigskip
\begin{minipage}{6in}
\begin{fig}\label{Mkpic}

{\bf Modules $\Sigma^{-2^k}M_k$.}

\begin{center}

\begin{\tz}[scale=.4]
\node at (-8,-3) {\lb};
\node at (-6,-3) {\lb};
\node at (-4,-3) {\lb};
\node at (-2,-3) {\lb};

\node at (-11,-3) {$k=4\ell+5$};
\node at (-8,1) {\lb};
\node at (-6,1) {\lb};
\node at (-4,1) {\lb};
\node at (-2,1) {\lb};

\node at (-11,1) {$k=4\ell+4$};
\node at (-8,-6.5) {\lb};
\node at (-6,-6.5) {\lb};
\node at (-4,-6.5) {\lb};
\node at (-2,-6.5) {\lb};

\node at (-11,-6.5) {$k=4\ell+6$};
\node at (-8,-10.5) {\lb};
\node at (-6,-10.5) {\lb};
\node at (-4,-10.5) {\lb};
\node at (-2,-10.5) {\lb};

\node at (-11,-10.5) {$k=4\ell+7$};
\node at (0,-3) {\lb};

\node at (4,-3) {\lb};
\node at (6,-3) {\lb};
\node at (8,-3) {\lb};
\node at (2,-3) {\lb};
\node at (10,-3) {\lb};
\node at (12,-3 ) {\lb};
\node at ( 14,-3) {\lb};
\node at ( 16,-3) {\lb};
\node at ( 18,-3) {\lb};
\node at (20,-3) {\lb};
\node at (22,-3) {\lb};
\draw (20,-3) -- (22,-3);

\draw (0,-3) -- (2,-3);
\draw (4,-3) -- (6,-3);
%\draw (8,3) -- (10,3);
\node at (9,-3) {$\ldots$};
\draw (12,-3) -- (14,-3);
\draw (16,-3) -- (18,-3);

\draw (4,-3) to[out=330, in=210] (8,-3);
\draw (12,-3) to[out=30, in=150] (16,-3);
\draw (18,-3) to[out=30, in=150] (22,-3);
\node at (-8,-3.7) {$1$};
\node at (20,-3.7) {$8\ell+3$};

\draw (2,-3) to[out=30, in=150] (6,-3);
\draw (10,-3) to[out=330, in=210] (14,-3);

%\draw (-4,4) -- (4,-4);

\node at (0,-10.5) {\lb};

\node at (4,-10.5) {\lb};
\node at (6,-10.5) {\lb};
\node at (8,-10.5) {\lb};
\node at (2,-10.5) {\lb};
\node at (16,-10.5) {\lb};
\node at (20,-10.5) {\lb};
\node at (22,-12 ) {\lb};
\node at ( 20,-12) {\lb};
\node at ( 18,-12) {\lb};
\node at ( 14,-12) {\lb};
\node at (16,-12) {\lb};
%\node at (-3,-10.5) {$k=4\ell+7$};
\draw (0,-10.5) -- (2,-10.5);
\draw (4,-10.5) -- (6,-10.5);
%\draw (8,1.5) -- (10,1.5);
\node at (9,-10.5) {$\ldots$};
\draw (16,-12) -- (14,-12);
\draw (22,-12) -- (20,-12);

\draw (4,-10.5) to[out=330, in=210] (8,-10.5);
\draw (2,-10.5) to[out=30, in=150] (6,-10.5);
\draw (18,-12) -- (20,-10.5);
\node at (10,-10.5) {\lb};
\node at (14,-10.5) {\lb};
\draw (14,-10.5) -- (16,-10.5);
\draw (10,-10.5) to[out=30, in=150] (14,-10.5);
\node at (12,-10.5) {\lb};
\draw (10,-10.5) -- (12,-10.5);
\node at (-8,-11.2) {$1$};
\node at (18,-10.9) {$8\ell+7$};

\draw (16,-12) to[out=330, in=210] (20,-12);
\draw (16,-10.5) to[out=30, in=150] (20,-10.5);
\draw (14,-12) to[out=30, in=150] (18,-12);
\draw (18,-12) to[out=30, in=150] (22,-12);

%\draw (-4,4) -- (4,-4);

\node at (0,1) {\lb};

\node at (4,1) {\lb};
\node at (6,1) {\lb};
\node at (8,1) {\lb};
\node at (2,1) {\lb};
\node at (10,1) {\lb};
\node at (12,1) {\lb};
\node at ( 14,1) {\lb};
\node at ( 16,1) {\lb};
\node at ( 18,1) {\lb};
%\node at (-3,1) {$k=4\ell+4$};
\draw (0,1) -- (2,1);
\draw (4,1) -- (6,1);
%\draw (8,3) -- (10,3);
\draw (12,1) -- (14,1);
\draw (16,1) -- (18,1);
\node at (9,1) {$\cdots$};
\draw (4,1) to[out=330, in=210] (8,1);
\draw (12,1) to[out=30, in=150] (16,1);
\node at (-8,.3) {$1$};
\node at (16,.3) {$8\ell+1$};

\draw (2,1) to[out=30, in=150] (6,1);
\draw (10,1) to[out=330, in=210] (14,1);
\draw (-8,1) -- (-6,1);
\draw (-4,1) -- (-2,1);
\draw (-8,-3) -- (-6,-3);
\draw (-4,-3) -- (-2,-3);
\draw (-8,-6.5) -- (-6,-6.5);
\draw (-4,-6.5) -- (-2,-6.5);
\draw (-8,-10.5) -- (-6,-10.5);
\draw (-4,-10.5) -- (-2,-10.5);
\draw (-4,1) to[out=30, in=150] (0,1);
\draw (-4,-3) to[out=30, in=150] (0,-3);
\draw (-4,-6.5) to[out=30, in=150] (0,-6.5);
\draw (-4,-10.5) to[out=30, in=150] (0,-10.5);
\draw (-6,1) to[out=330, in=210] (-2,1);
\draw (-6,-3) to[out=330, in=210] (-2,-3);
\draw (-6,-6.5) to[out=330, in=210] (-2,-6.5);
\draw (-6,-10.5) to[out=330, in=210] (-2,-10.5);
\draw (-6,1) circle [radius=.45];
\draw (-6,-3) circle [radius=.45];
\draw (-6,-6.5) circle [radius=.45];
\draw (-6,-10.5) circle [radius=.45];
\draw (16,1) circle [radius=.45];
\draw (20,-3) circle [radius=.45];
\draw (18,-12) circle [radius=.45];

%\node at (-3,-6.5) {$k=4\ell+6$};
\node at (0,-6.5) {\lb};

\node at (4,-6.5) {\lb};
\node at (6,-6.5) {\lb};
\node at (8,-6.5) {\lb};
\node at (2,-6.5) {\lb};
\node at (12,-6.5) {\lb};
\node at (14,-8 ) {\lb};
\node at ( 22,-8) {\lb};
\node at ( 20,-8) {\lb};
\node at ( 16,-8) {\lb};
\node at (12,-6.5) {\lb};
\node at (14,-6.5) {\lb};
\node at (16,-6.5) {\lb};

\draw (0,-6.5) -- (2,-6.5);
\draw (4,-6.5) -- (6,-6.5);
%\draw (8,1.5) -- (10,1.5);
\node at (9,-6.5) {$\ldots$};
\draw (14,-8) -- (16,-8);
\draw (20,-8) -- (22,-8);

\draw (4,-6.5) to[out=330, in=210] (8,-6.5);
\draw (2,-6.5) to[out=30, in=150] (6,-6.5);
\draw (10,-6.5) to[out=330, in=210] (14,-6.5);
\draw (12,-6.5) to[out=30, in=150] (16,-6.5);

\draw (16,-8) to[out=330, in=210] (20,-8);
\draw (14,-8) to[out=45, in=225] (18,-6.5);
\draw (18,-6.5) to[out=315, in=135] (22,-8);
\node at (18,-6.5) {\lb};
\draw (16,-6.5) -- (18,-6.5);
\node at (10,-6.5) {\lb};
\draw (12,-6.5) -- (14,-6.5);
\node at (-8,-7.2) {$1$};
\node at (16,-5.5) {$8\ell+5$};
\draw (16,-7.25) circle [x radius=.6,y radius=1.2];

\end{\tz}
\end{center}
\end{fig}
\end{minipage}

\bigskip
For example, if $k=4\ell+4$, $\Sigma^{-2^k}M_k$ has a single nonzero class $g_i$ for $1\le i\le 8\ell+2$ with 
$$\sq^2\sq^1\sq^2(g_i)=\sq^1g_{i+4}\ne0\text{ if }i\equiv 3\ (4), \ i\le 8\ell-5,$$
and $\sq^2\sq^1(g_1)=\sq^1(g_3)\ne0$.
\begin{proof}[Proof of Theorem \ref{Mkthm}] We use the classes $x_j$, $j\ge4$, of Lemma \ref{sq2thm}, but find it convenient to write $c_{18}$ as $x_3^2$, even though it isn't a perfect square. In the discussion below we treat it as a perfect square. For $k\ge4$, let $\ccM_k$ denote the finite $A_1$-submodule of $H^*K_2$ with basis all elements $\ds\prod_{j=3}^kx_j^{e_j}$ satisfying $\sum e_j2^{j}=2^{k}$. Our desired $A_1$-module $M_k$ will be a submodule of $\ccM_k$.

We first show that $\ccM_k$ is $Q_0$-free. Every monomial in $\ccM_k$ which is a perfect square can be written uniquely as $\ds\prod_{s\in S}x_s^2\cdot\prod_{t\in T}x_t^{2e_t}$ with $e_t>1$ and $S$ and $T$ disjoint. It determines a $Q_0$-free summand
$$\prod_{t\in T}x_t^{2e_t-2}\bigotimes_{i\in S\cup T}\langle x_{i+1},x_i^2\rangle.$$ Every monomial in $\ccM_k$ is in a unique one of these summands, as can be seen by writing the monomial as $\ds P\cdot\prod_{u\in U}x_u$ with $P$ a perfect square. This monomial is in the $Q_0$-free summand determined as above from $\ds P\cdot\prod_{u\in U}x_{u-1}^2$.

We now show, somewhat similarly, that $$H_*(\ccM_k;Q_1)=
\langle x_{k-1}^2,x_4\prod_{t=3}^{k-2}x_t^2\rangle.$$
Let $k\ge5$, as the case $k=4$ is elementary. Every monomial in $\ccM_k$ which is a perfect square or $x_4$ times a perfect square can be written uniquely as $\ds\prod_{s\in S}x_s^2\cdot x_4^\eps\cdot\prod_{t\in T}x_t^{2e_t}$ with $e_t\ge2$, $S$ and $T$ disjoint, and $\eps\in\{0,1\}$. Also, $T\ne\emptyset$ unless the monomial is $x_{k-1}^2$ or $x_3^2x_4^3x_5^2\cdots x_{k-2}^2$, in order to have $\sum e_j2^j=2^k$. This monomial determines a $Q_1$-free summand 
$$\prod_{s\in S}x_s^2\cdot x_4^\eps\cdot\prod_{t\in T}x_t^{2e_t-4}\bigotimes_{t\in T}\langle x_{t}^4,x_{t+2}\rangle.$$
%Every monomial $x_4\cdot \ds\prod_{s\in S}x_s^2\cdot\prod_{t\in T}x_t^{2e_t}$ with $e_t\ge2$ determines a $Q_1$-free summand 
%$$x_4\prod_{s\in S}x_s^2\cdot\prod_{t\in T}x_t^{2e_t-4}\bigotimes_{t\in T}\langle x_{t}^4,x_{t+2}\rangle.$$
 %Every monomial, except the two already mentioned, is in a unique one of these $Q_1$-free summands. It can be written as $$\prod x_i^{2e_i}\cdot x_4^\eps\cdot \prod_{\substack{s\in S\\ s>4}}x_s$$ with $\eps\in\{0,1\}$.
 Every monomial in $\ccM_k$ except $x_{k-1}^2$ and $x_3^2x_4^3x_5^2\cdots x_{k-2}^2$ is in a unique one of these by writing it as $$P\cdot x_4^\eps\cdot \prod_{\substack{t\in T\\ t>2}}x_{t+2}$$ with $P$ a perfect square; it is in the $Q_0$-free summand determined as above from $\ds P\cdot x_4^\eps\prod_{t\in T}x_{t}^4$.

By \cite[Proposition 13.13 and p.203]{Mar}, the $A_1$-module $\ccM_k$ has an expression, unique up to isomorphism, as $M_k\oplus F$, with  $F$ free and $M_k$ reduced. This $M_k$ is $Q_0$-free and has the $Q_1$-homology stated in the theorem. 
To get a sense of why this is true, it is impossible for a $Q_0$-free module to have just one $Q_1$-homology class.  Thus the two $Q_1$-homology classes must be in the same summand and what is left must be free over $A_1$.

We will determine its precise structure.

The module $\ccM_4$ has only the classes $\langle x_4,x_3^2\rangle$, so this is also $M_4$. For $k\ge5$, $\ccM_k$ in gradings $\le 2^k+4$ has just the classes $\langle x_k, x_{k-1}^2, x_{k-2}^2x_{k-1}, x_{k-2}^4\rangle$, in which $\sq^1$ and $\sq^2$ act as depicted on the left four dots in each row of Figure \ref{Mkpic}. We will use Yu's Theorem (\cite[Theorem 7.1]{Br}) to show that $M_k$ must have the form claimed in the theorem. We thank Bob Bruner for suggesting the use of Yu's Theorem.

 For $k\ge5$, let $\ccM_k^*$ denote the $A_1$-module dual to $\ccM_k$. Its top class $x_k^*$ is in grading $-2^k-1$ and bottom class $(x_3^{2^{k-3}})^*$ is in grading $-2^k-2^{k-3}$. Let $(\ccM_k^*)^+$ denote an $A_1$-module which agrees with $\ccM_k^*$ in gradings less than $-2^k$ and for $i\ge -2^k$ has a single nonzero class $y_i$ in grading $i$, with $\sq^2\sq^3y_{4j}=y_{4j+5}=\sq^1y_{4j+4}$, and $0\ne\sq^1y_{-2^k}\in\im(\sq^2)$. 
 This $(\ccM_k^*)^+$ is $Q_0$-free and has a single nonzero $Q_1$-homology class, dual to $x_4\prod_{t=3}^{k-2}x_t^2$, in grading $7-2^k-2k$. By \cite[Proposition 13.13 and p.203]{Mar}, $(\cM_k^*)^+$ is isomorphic to the direct sum of a reduced module $R$ and a free module. Since $R$ is $Q_0$-free and reduced with a single nonzero $Q_1$-homology class, by Yu's Theorem, $R$
 is  isomorphic to a shifted version of one of the four modules $P_i$, $0\le i\le3$, depicted in \cite[Figure 1]{Br}. These modules begin with a form dual to one of the four endings of the modules in Figure \ref{Mkpic}, followed by an infinite string of $\sq^1z_n=\sq^2\sq^3 z_{n-4}$.

 Our module $M_k$ is defined as the dual of $R/T$, where $T$ is the submodule of $R$ consisting of classes of grading $\ge-2^k$. This $M_k$ will begin the same way as $\ccM_k$, as $\Sigma^{2^k}\langle g_1, \sq^1g_1, g_3, \sq^1g_3=\sq^2\sq^1g_1\rangle$, and will end with  one of the four types in Figure \ref{Mkpic}, although {\it a priori} it could have a different length. Its top $Q_1$-homology class is in grading $2^k+2k-7$.
 
 Since $A_1$ has 8 basis elements, the total number of basis elements in $M_k$ will be congruent mod 8 to the number in $\ccM_k$. There is a 1-1 correspondence between a basis for $\ccM_k$ and the set of partitions of $2^{k-3}$ into 2-powers. ($e_j$ tells the number of occurrences of $2^{j-3}$.) It is proved in \cite{Ch} that this number of partitions is $\equiv2$ mod 8 if $k$ is even, and is $\equiv4$ mod 8 if $k$ is odd. 

 Let $k=4\ell+4$. The first module in Figure \ref{Mkpic} is the only possibility that satisfies that the top $Q_1$-homology class is in grading $2^k+2k-7=2^k+8\ell+1$ and the number of basis elements is $\equiv2$ mod 8. The second and fourth types in Figure \ref{Mkpic} have their top $Q_1$-homology class in grading 3 mod 4, while if the third type had its top $Q_1$-homology class in $2^k+8\ell+1$, its number of basis elements would be 6 mod 8. A similar analysis, utilizing top $Q_1$-homology class mod 4 and number of basis elements mod 8, shows the $M_k$ must be as claimed.

\end{proof}

Prior to discovering this proof, we had laboriously found explicit bases for $M_k$ for $k\le9$. For example, with $abcd$ denoting $x_6^ax_5^bx_4^cc_{18}^d$, the basis for $M_7$ had $x_7$, $2000$, $1200$, $0400$, $1040$, $0240$, $0080$ along the top, as pictured in Figure \ref{Mkpic}, and $1111+0320$, $0311+1031+1102+0240$, $0231+1022+0160$, $0151+1013+0222+0080$, and $0071+0142+0213+1004$ along the bottom.

\section{Ext charts and tensor products}\label{extsec}

There is a nice pattern to the charts $\ext_{A_1}^{s,t}(\Sigma^{-2^k}M_k,\zt)$, depicted, as usual, in coordinates $(t-s,s)$. They are similar to familiar charts of $\ext_{A_1}(H^*P^{2n},\zt)$ (e.g., \cite{D}). In fact, there are $A_1$-module isomorphisms $\Sigma^{-2^{4\ell+4}}M_{4\ell+4}\approx H^*P^{8\ell+2}$ and $\Sigma^{-2^{4\ell+5}}M_{4\ell+5}\approx H^*P^{8\ell+4}$. For all $k$, all classes in these charts are $v_1^4$-periodic; i.e., $\ext^{s,t}\to\ext^{s+4,t+12}$ is bijective for $s\ge0$.  All the charts have the same upper edge, $(8i+x,4i+y)$ for $(x,y)=(1,0)$, $(2,1)$, $(3,2)$, and $(7,3)$. The lower edge drops by 1 for each increase in $k$, as long as $s\ge0$. In Figure \ref{Mcharts} we show the beginning of the charts for $5\le k\le7$. These Ext charts are easily obtained by standard methods from the explicit description of the modules in Figure \ref{Mkpic}. See \cite[Appendix A]{BG} for a rather detailed discussion of these methods.

\bigskip
\begin{minipage}{6in}
\begin{fig}\label{Mcharts}

{\bf $\ext_{A_1}(\Sigma^{-2^k}M_k,\zt)$.}

\begin{center}

\begin{\tz}[scale=.4]
\draw (-.5,0) -- (6.5,0);
\draw (8.5,0) -- (21.5,0);
\draw (23.5,0) -- (36.5,0);
\node at (0,0) {\lb};
\draw (0,0) -- (2,2) -- (2,0) -- (4,2);
\node at (1,1) {\lb};
\node at (2,2) {\lb};
\node at (2,0) {\lb};
\node at (2,1) {\lb};
\node at (3,1) {\lb};
\node at (4,2) {\lb};
\node at (6,3) {\lb};
\node at (9,0) {\lb};
\node at (10,1) {\lb};
\node at (11,2) {\lb};
\node at (11,0) {\lb};\node at (11,1) {\lb};
\node at (12,0) {\lb};
\draw (9,0) -- (11,2) -- (11,0);
\node at (0,-.8) {$1$};
\node at (6,-.8) {$7$};
\node at (3,-2) {$k=5$};
\node at (13,1) {\lb};
\node at (15,2) {\lb};
\node at (15,3) {\lb};
\node at (19,3) {\lb};
\node at (19,4) {\lb};
\node at (19,5) {\lb};
\node at (19,6) {\lb};
\node at (20,4) {\lb};
\draw (15,2) -- (15,3);
\draw (17,4) -- (19,6) -- (19,3) -- (21,5);
\node at (9,-.8) {$1$};
\node at (15,-.8) {$7$};
\node at (19,-.8) {$11$};
\node at (15,-2) {$k=6$};
\node at (17,4) {\lb};
\node at (18,5) {\lb};
\node at (21,5) {\lb};
\node at (24,0) {\lb};
\node at (25,1) {\lb};
\node at (26,2) {\lb};
\node at (26,1) {\lb};
\node at (26,0) {\lb};
\node at (28,0) {\lb};
\node at (30,1) {\lb};
\draw (24,0) -- (26,2) -- (26,0);
\draw (30,1) -- (30,3);
\draw (32,4) -- (34,6) -- (34,2) -- (36,4);
\node at (24,-.8) {$1$};
\node at (30,-.8) {$7$};
\node at (34,-.8) {$11$};
\node at (10,1) {\lb};
\node at (30,2) {\lb};
\node at (30,3) {\lb};
\node at (32,4) {\lb};
\node at (33,5) {\lb};
\node at (34,6) {\lb};
\node at (34,5) {\lb};
\node at (34,4) {\lb};
\node at (34,3) {\lb};
\node at (34,2) {\lb};
\node at (35,3) {\lb};
\node at (36,4) {\lb};
\draw (12,0) -- (13,1);
\node at (30,-2) {$k=7$};

\end{\tz}
\end{center}
\end{fig}
\end{minipage}
\bigskip

Explicitly, $\Sigma^{-2^k}M_k$ has, for $i\ge0$,
\begin{itemize}
    \item $0$ in $8i+6,8$,
    \item $\zt$ in $8i+1,2$ of filtration $4i+0,1$,
    \item $\zt$ in $8i+4,5$ of filtration $4i-k+6,7$ if $4i-k+6,7\ge0$, else 0,
    \item $\Z/2^{k-4}$ in $8i+7$ with generator of filtration $4i-k+8$ if $4i-k+8\ge0$, else $\Z/2^{4i+4}$ with generator of filtration 0, and
    \item $\Z/2^{k-2}$ in $8i+3$ with generator of filtration $4i-k+5$ if $4i-k+5\ge0$, else $\Z/2^{4i+3}$ with generator of filtration 0.
\end{itemize}
\ni Here, as usual, $d$ dots connected by vertical segments yield a $\Z/2^d$ summand.

The $A_1$-modules $M_k$ in Section \ref{Mksec} correspond to the $E_1$-modules $M_k$ in the $E_1$-splitting of $H^*(K_2)$ in \cite[(2.16)]{DW}. The correspondence is that, as an $E_1$-module, the $A_1$-module $M_k$ is isomorphic to the $E_1$-module $M_k$ plus perhaps a single copy of $E_1$. Moreover, the $Q_1$-homology classes agree, with $u_j$ replaced by $x_j$. Also involved in the $E_1$ splitting in \cite[(2.16)]{DW} were summands $M_k\cdot P$, where $P$ is a product of finitely many distinct classes $u_j^2$ with $j\ge k$. Although $u_j^2$ is acted on trivially by $E_1$, $\sq^2(u_j^2)\ne0$, so the corresponding $A_1$ summands must do more than just multiply by the product of the classes $u_j^2$. To maintain some consistency with \cite{DW}, in Definition \ref{Mzdef} we will define $M_kz_j$ to be a reduced $Q_0$-free $A_1$-module with
\begin{equation}\label{Mzd}H_*(M_kz_j;Q_1)=H_*(M_k;Q_1)\ot\langle u_{j+1}^2\rangle,\end{equation}
and similarly for products with more than one $z_j$.

For $j\ge3$, let $G_j=\langle u_{j+2},u_{j+1}^2,u_j^4\rangle$ with $\sq^2\sq^1u_{j+2}=u_j^4$. If $M$ is a $Q_0$-free $A_1$-module, then $M\ot G_j$ is $Q_0$-free and 
$$H_*(M\ot G_j;Q_1)=H_*(M;Q_1)\ot\langle u_{j+1}^2\rangle.$$
\begin{defin}\label{Mzdef}
We define $M_kz_j$ to be the reduced summand of the $A_1$-module $M_k\ot G_j$. \end{defin}
Let $P_j$ be the $A_1$-module for which there is a short exact sequence (SES)
$$0\to G_j\to P_j \to \Sigma^{2^{j+2}-1}\zt\to 0$$
with $u_{j+2}\in\im(\sq^2)$. Then $H_*(P_j;Q_1)=0$, so $M_k\ot P_j$ is a free $A_1$-module by Wall's Theorem (\cite{Wall}), using also a K\"unneth Theorem for $Q_i$-homology. The short exact sequence of $A_1$-modules
\begin{equation}\label{s}0\to M_k\ot G_j\to M_k\ot P_j \to \Sigma^{2^{j+2}-1}M_k\to 0\end{equation}
has a long exact sequence Ext sequence which implies that 
$$\ext^{s,t}_{A_1}(M_k\ot G_j,\zt)\to \ext_{A_1}^{s+1,t+1}(\Sigma^{2^{j+2}}M_k,\zt)$$
is bijective for $s\ge1$ and surjective for $s=0$. We deduce that, for the reduced submodule, $\ext_{A_1}(M_kz_j,\zt)$ is formed from $\ext_{A_1}(\Sigma^{2^{j+2}}M_k,\zt)$ by shifting filtrations down by 1, or, equivalently, by killing classes of filtration 0. Elements in the kernel of (\ref{s}) when $s=0$ correspond to free summands, which do not appear in the reduced submodule.
Iterating, we have 
\begin{prop} For distinct $j_i\ge k-1$, $\ext_{A_1}(M_kz_{j_1}\cdots z_{j_r},\zt)$ is formed from\linebreak $\ext_{A_1}(\Sigma^{2^{{j_1}+2}+\cdots +2^{{j_r}+2}}M_k,\zt)$ by reducing filtrations by $r$.\label{Mzprop}\end{prop}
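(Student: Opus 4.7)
I would proceed by induction on $r$, the case $r=1$ being the content of the discussion immediately preceding the proposition. For the inductive step, let $N := M_k z_{j_1}\cdots z_{j_r}$; by hypothesis $N$ is a reduced $Q_0$-free $A_1$-module whose $\ext$ is obtained from $\ext_{A_1}(\Sigma^{2^{j_1+2}+\cdots+2^{j_r+2}}M_k,\zt)$ by dropping all filtrations by $r$. I would first verify that $N$ retains a clean K\"unneth description of its $Q_1$-homology (and trivial $Q_0$-homology): iteratively applying the K\"unneth formula for $Q_i$-homology to $M_k\otimes G_{j_1}\otimes\cdots\otimes G_{j_r}$, together with the fact that splitting off the free Margolis complement does not affect $Q_i$-homology, shows that $H_*(N;Q_1)$ is still a tensor product of $H_*(M_k;Q_1)$ with the classes $u_{j_i+1}^2$ coming from (\ref{Mzd}), as required to make Wall's Theorem applicable in the next step.

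Next I would mimic the construction of (\ref{s}) with $M_k$ replaced by $N$ and $j$ replaced by $j_{r+1}$: tensoring the defining short exact sequence of $P_{j_{r+1}}$ with $N$ yields
\begin{equation*}
0 \to N\otimes G_{j_{r+1}} \to N\otimes P_{j_{r+1}} \to \Sigma^{2^{j_{r+1}+2}-1}N \to 0.
\end{equation*}
By Wall's Theorem, $N\otimes P_{j_{r+1}}$ is $A_1$-free, since its $Q_0$-homology vanishes ($N$ is $Q_0$-free) and its $Q_1$-homology vanishes ($H_*(P_{j_{r+1}};Q_1)=0$), both by K\"unneth. The associated long exact Ext sequence then gives the bijection
$$\ext^{s,t}_{A_1}(N\otimes G_{j_{r+1}},\zt)\;\xrightarrow{\cong}\;\ext^{s+1,t+1}_{A_1}(\Sigma^{2^{j_{r+1}+2}}N,\zt)$$
for $s\ge1$ and a surjection at $s=0$, exactly as in the base case.

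To finish, I would observe that passage to the reduced summand $M_k z_{j_1}\cdots z_{j_{r+1}}$ of $N\otimes G_{j_{r+1}}$ simply discards a free complement (whose Ext lives entirely in filtration $0$), which on the target side corresponds to killing the filtration-$0$ row. Composing this one-step filtration drop with the inductive hypothesis's drop of $r$ between $\Sigma^{2^{j_1+2}+\cdots+2^{j_r+2}}M_k$ and $N$ yields the total drop of $r+1$ between $\Sigma^{2^{j_1+2}+\cdots+2^{j_{r+1}+2}}M_k$ and $M_kz_{j_1}\cdots z_{j_{r+1}}$.

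The principal obstacle is keeping the inductive hypotheses in force at each stage --- at every step the reduced summand must continue to be $Q_0$-free with the expected K\"unneth $Q_1$-homology, so that Wall's Theorem can be reapplied. This is ultimately guaranteed by the hypotheses that the $j_i$ are distinct and all at least $k-1$, which place the classes $u_{j_i+1}^2$ high enough to avoid collision with the $Q_1$-homology of $M_k$ and to multiply to distinct nonzero monomials, so no unexpected cancellations occur when the free complement is discarded.
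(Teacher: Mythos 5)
Your proposal is correct and follows essentially the same route as the paper, which compresses the entire argument into the single phrase ``Iterating, we have'' appearing immediately before the proposition. You spell out the iteration as an induction and, in particular, make explicit what the paper leaves tacit: that the reduced summand $M_kz_{j_1}\cdots z_{j_r}$ remains $Q_0$-free (via the K\"unneth formula for Margolis homology, since $M_k$ is $Q_0$-free) and has the $Q_1$-homology predicted by (\ref{Mzd}), so that Wall's theorem can be reapplied to conclude $N\ot P_{j_{r+1}}$ is free. That bookkeeping is exactly what justifies the word ``iterating,'' so your write-up is a faithful expansion of the paper's proof rather than a different one.
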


The $E_1$-splitting of $H^*K_2$ in \cite[Proposition 2.11]{DW} also involved products of modules with a class called $u_2^2$ there, but would be $u_0^2$ in our notation. Again, since $\sq^2(u_0^2)\ne0$, we must expand to an $A_1$-submodule of $H^*(K_2)$, namely
\begin{equation}\label{Udef}U=\langle u_0, u_1, u_0^2,u_2,u_1^2\rangle.\end{equation}
The $A_1$-structure of this is $\Sigma^2\langle 1,\sq^1,\sq^2, \sq^2\sq^1,\sq^3\sq^1\rangle$, sometimes called the Joker (\cite{Bak}). Note that $H_*(U;Q_1)=\langle u_0^2\rangle$.

\begin{prop}\label{Uext} If $M$ is a $Q_0$-free $A_1$-module and $U$ is as above, then for $s>0$
$$\ext_{A_1}^{s,t}(U\ot M,\zt)\approx \ext_{A_1}^{s+2,t+2}(M,\zt).$$
\end{prop}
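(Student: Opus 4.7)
The plan is to mimic the approach used earlier in the paper for $M_k z_j$, based on the following principle: a finite $A_1$-module $P$ has the property that $P \otimes M$ is $A_1$-free for every $Q_0$-free $M$ if and only if $H_*(P;Q_1)=0$. This follows from Wall's theorem combined with the Künneth formula for $Q_i$-homology: $H_*(P\otimes M;Q_0)=H_*(P;Q_0)\otimes H_*(M;Q_0)=0$ holds automatically, so only the $Q_1$-homology of $P$ needs to vanish. Any short exact sequence $0 \to A \to P \to B \to 0$ with $P\otimes M$ free then yields, via the long exact sequence of Ext applied to $0\to A\otimes M\to P\otimes M\to B\otimes M\to 0$ and the vanishing of $\ext^{\ge 1}_{A_1}(\text{free},\zt)$, a dimension-shifting isomorphism relating $\ext(A\otimes M,\zt)$ and $\ext(B\otimes M,\zt)$.

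First I would exhibit the short exact sequence
\[
0 \to K \to \Sigma^2 A_1 \to U \to 0,
\]
where $\Sigma^2 A_1 \to U$ sends the free generator to $u_0$ (the unique minimal $A_1$-generator of $U$). The annihilator of $u_0$ in $A_1$ is the cyclic left ideal $A_1\cdot\sq^3$, so $K \cong A_1\cdot\sq^3$ is a $3$-dimensional submodule of $\Sigma^2 A_1$ with basis $\{\sq^3,\sq^2\sq^1\sq^2,\sq^2\sq^1\sq^2\sq^1\}$ in degrees $5,7,8$. Since $\Sigma^2 A_1\otimes M$ is $A_1$-free, the LES gives $\ext^{s,t}_{A_1}(U\otimes M,\zt)\cong\ext^{s-1,t}_{A_1}(K\otimes M,\zt)$ for $s\ge 2$ (together with a check at $s=1$).

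Next I would iterate: build a second SES $0 \to L \to F_1 \to K \to 0$ with $F_1\otimes M$ free, and identify $L$, up to summands having vanishing $Q_1$-homology (which contribute nothing to $\ext(-\otimes M,\zt)$ in positive filtration), with $\Sigma^{-2}\zt$. Combining the two LES would yield
\[
\ext^{s,t}_{A_1}(U\otimes M,\zt)\;\cong\;\ext^{s+2,t}_{A_1}(\Sigma^{-2}\zt \otimes M,\zt)\;=\;\ext^{s+2,t+2}_{A_1}(M,\zt)
\]
for $s\ge 1$. Conceptually this amounts to showing that $U$ is stably equivalent to $\Omega^2\Sigma^{-2}\zt$ in the quotient category of $A_1$-modules modulo those with $H_*(-;Q_1)=0$.

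The hardest step is the second one: identifying the correct kernel in the iterative resolution so that the surviving piece, after passing to the tensor product with $Q_0$-free $M$, is exactly $\Sigma^{-2}\zt\otimes M=\Sigma^{-2}M$ and no spurious filtration-zero contributions remain. This should follow from a careful Poincaré series / dimension count and the explicit minimal $A_1$-resolution of the cyclic module $K$, together with verification that the extra summands that appear along the way all satisfy $H_*(-;Q_1)=0$ and are therefore invisible to Ext after tensoring with $M$.
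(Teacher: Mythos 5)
Your first step is correct and matches the paper: the kernel of the free cover $\Sigma^2 A_1 \to U$ (the paper's $G$) is a $3$-dimensional module in degrees $5,7,8$, and tensoring with $M$ and taking the Ext long exact sequence gives $\ext^{s,t}_{A_1}(U\ot M,\zt)\cong\ext^{s-1,t}_{A_1}(K\ot M,\zt)$ for $s\ge 2$. Your general principle (a finite $Q_0$-free module $P$ has $P\ot M$ free for all $Q_0$-free $M$ iff $H_*(P;Q_1)=0$, via Wall's theorem and the K\"unneth formula) is also correct and is exactly what the paper uses.

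However, the second step contains a genuine gap. In any short exact sequence $0\to L\to F_1\to K\to 0$, the submodule $L$ lives in degrees at or above the bottom degree of $F_1$, and $F_1$ must surject onto $K$ whose bottom class is in degree $5$; consequently $L$ is concentrated in degrees $\ge 6$ and cannot have a $\Sigma^{-2}\zt$ summand. The paper's choice is $F_1 = P = \Sigma^5 A_1/(\sq^1)$ (which has $H_*(P;Q_1)=0$, so $P\ot M$ is free), and the kernel is $\Sigma^{10}\zt$, not $\Sigma^{-2}\zt$. Moreover, when you combine the two long exact sequences the filtration shift goes the other way: from $\ext^{s,t}(U\ot M)\cong\ext^{s-1,t}(K\ot M)$ and $\ext^{s-1,t}(K\ot M)\cong\ext^{s-2,t}(L\ot M)$ you get $\ext^{s,t}(U\ot M)\cong\ext^{s-2,t}(L\ot M)$, not $\ext^{s+2,t}(L\ot M)$. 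With the correct $L=\Sigma^{10}\zt$ this yields $\ext^{s,t}(U\ot M)\cong\ext^{s-2,t-10}(M)$, and you then need the $v_1^4$-periodicity $\ext^{s,t}\cong\ext^{s+4,t+12}$ of $\ext_{A_1}$ on $Q_0$-free modules (Adams's theorem, as in the paper) to rewrite this as $\ext^{s+2,t+2}(M)$. Your two errors (wrong kernel, wrong direction of the filtration shift) happen to compensate to produce the correct final formula, but the derivation as written does not establish it; the missing ingredient is the periodicity step. Your closing remark that $U\simeq\Omega^2\Sigma^{-2}\zt$ stably is in fact true, but it is equivalent to $\Omega^{-2}U\simeq\Sigma^{-2}\zt$, whereas your pair of SESs computes $\Omega^2 U\simeq\Sigma^{10}\zt$; passing from the latter to the former is exactly where the periodicity $\Omega^4\simeq\Sigma^{12}$ is used.
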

\begin{proof} There is a SES of $A_1$-modules
$$0\to G\to F\to U\to 0,$$
where $F$ is a free $A_1$-module on a generator of degree 2, and $G=\langle \io_5,\sq^2\io_5,\sq^3\io_5\rangle$. After tensoring with $M$, the exact Ext sequence yields an isomorphism for $s>0$
$$\ext_{A_1}^{s,t}(G\ot M,\zt)\to \ext_{A_1}^{s+1,t}(U\ot M,\zt).$$
Let $P=\Sigma^5A_1/(\sq^1)$. There is a SES of $A_1$-modules
$$0\to \Sigma^{10}\zt\to P\to G\to 0.$$
Then $P\ot M$ is free by Wall's theorem, since $H_*(P;Q_1)=0$ and $H_*(M;Q_0)=0$. So tensoring this sequence with $M$ yields isomorphisms for $s>0$
$$\ext_{A_1}^{s,t}(\Sigma^{10}M,\zt)\to \ext^{s+1,t}_{A_1}(G\ot M,\zt).$$
Combining the two yields
$$\ext_{A_1}^{s,t}(\Sigma^{10}M,\zt)\approx \ext_{A_1}^{s+2,t}(U\ot M,\zt).$$
The $Q_0$-free module $U\ot M$ has $v_1^4$-periodicity in Ext
$$\ext_{A_1}^{s,t}(U\ot M,\zt)\approx \ext_{A_1}^{s+4,t+12}(U\ot M,\zt)$$
for $s>0$ by \cite[Theorem 5.1]{JFA}. This is isomorphic to $\ext_{A_1}^{s+2,t+12}(\Sigma^{10}M,\zt)\approx \ext_{A_1}^{s+2,t+2}(M,\zt)$.

\end{proof}

We let $UM_k$ and $UM_kz_{j_1}\cdots z_{j_r}$ denote reduced modules after tensoring with $U$. By Proposition \ref{Uext}, their Ext charts are obtained from those of $M_k$ or $M_kz_{j_1}\cdots z_{j_r}$ by decreasing filtrations by 2.

The summand $S$ in \cite[Proposition 2.11]{DW} is the reduced summand of tensor products of the summands of the type that we have been considering here with an $E_1$-module $N$ with $Q_1$-homology  class $x_9$. We have an analogous construction in the $A_1$ context.

Using the classes $w_j$ in the proof of Theorem \ref{sq2thm}, let $N$ be the $A_1$-module $$N=\langle w_2,w_0w_2,w_1w_2,w_3,w_2^2\rangle.$$
This satisfies $\sq^2\sq^3(w_2)=\sq^1(w_3)=w_2^2$ with $|w_2|=5$. It has the property that if $M$ is a $Q_0$-free $A_1$-module, then \begin{equation}\label{Niso}\ext_{A_1}(N\ot M,\zt)\approx \ext_{A_1}(\Sigma^9M,\zt)\end{equation} in positive filtration as is easily seen from the Ext sequence obtained from the SES
$$0\to N'\ot M\to N\ot M\to\Sigma^9M\to 0,$$
where $N'$ is the $A_1$-submodule of $N$ generated by $w_2$, since $N/N'=\Sigma^9\zt$ and $N'\ot M$ is a free $A_1$-module by Wall's theorem. For any of our modules $U^\eps M_k z_J$,
we let $NU^\eps M_kz_J$ denote a reduced submodule of $N\ot U^\eps M_kz_J$. It is isomorphic to $\Sigma^9U^\eps M_kz_J$.

The analogue of \cite[Proposition 2.11]{DW} is given in Theorem \ref{splt}. We let $y_1^2=u_0^4$; it is annihilated by $\sq^1$ and $\sq^2$.
\begin{thm}\label{splt} There is an $A_1$-module splitting
$$H^*K_2=P[y_1^2]\ot(\zt\oplus U\oplus N\oplus NU)\ot(\zt\oplus\bigoplus_{k\ge4}M_k\L_{k-1})\oplus F,$$
where $F$ is free and $\L_{k-1}=E[z_j:j\ge k-1]$ is an exterior algebra. The interpretation of $M_kz_{j_1}\cdots z_{j_r}$ is as in Definition \ref{Mzdef}, and $U\ot M_k\L_{k-1}$, $N\ot M_k\L_{k-1}$, and $NU\ot M_k\L_{k-1}$ mean the reduced summand. For reduced cohmology, one can remove the  $\zt$ summand from the splitting.\end{thm}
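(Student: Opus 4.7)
The plan is to upgrade the $E_1$-splitting of $H^*K_2$ from \cite[Proposition 2.11]{DW} to the claimed $A_1$-splitting, using the fact that each $E_1$-summand there has an $A_1$-enhancement already constructed in Sections \ref{Mksec} and \ref{extsec}. Each factor on the right is realized by explicit classes in $H^*K_2$: the polynomial generator $y_1^2 = u_0^4$ is annihilated by both $\sq^1$ and $\sq^2$; $U$ is the Joker from (\ref{Udef}); $N$ is the module defined just before Theorem \ref{splt}; $M_k$ is from Theorem \ref{Mkthm}; and the tensor factor $z_j$ in $M_k\Lambda_{k-1}$ is interpreted via Definition \ref{Mzdef} using the module $G_j = \langle u_{j+2}, u_{j+1}^2, u_j^4\rangle$.

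The concrete step is to form the tensor product on the right using these explicit $A_1$-submodules, then take reduced summands (in $U\otimes M_k\Lambda_{k-1}$, etc.) as indicated, and map the result into $H^*K_2$ via multiplication of representative monomials. Call the image $R$. By construction each factor is $Q_0$-free, so $R$ is $Q_0$-free; and its $Q_1$-Margolis homology is computed via K\"unneth from the $Q_1$-homology of each factor, using (\ref{Mzd}), Theorem \ref{Mkthm}, and the fact that $H_*(U;Q_1) = \langle u_0^2\rangle$ and (by (\ref{Niso})) $N$ contributes a single $Q_1$-class in degree $9$.

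The verification that this is a direct summand proceeds by comparing Margolis homologies. The $E_1$-splitting of \cite[Proposition 2.11]{DW} gives a $\zt$-basis of $H^*K_2$ indexed by essentially the same monomials (the $A_1$-enhancements only add classes that are already present in $H^*K_2$ and are needed to close up each $E_1$-summand under $\sq^2$; as $E_1$-modules, each $A_1$-factor agrees with the corresponding $E_1$-factor up to a free $E_1$-summand, as noted in Section~\ref{extsec}). Thus the basis monomials for $R$ can be tracked back to the $E_1$-basis, are $\zt$-linearly independent in $H^*K_2$, and account for all of $H^*K_2$ modulo an $A_1$-module with vanishing $Q_0$- and $Q_1$-Margolis homology. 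By Wall's theorem (\cite{Wall}), the complementary $A_1$-module is free, and we take it to be $F$.

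The main obstacle is bookkeeping: the additional classes that enter when one passes from an $E_1$-summand to its $A_1$-enhancement (for example, the classes $u_1$ and $u_2$ in $U$, or the extra classes in $N$, or the non-square classes joined to perfect squares inside $M_kz_J$) must not be double-counted across different tensor factors, and the $\sq^2$-driven extensions must not create collisions between what were independent $E_1$-summands. The argument that no collisions occur reduces, via the $E_1$-splitting, to checking that each added class belongs to a unique factor in the tensor product, which is immediate from the grading and the disjointness of the index sets $\{j\ge k-1\}$ for the $z_j$'s. Once collisions are ruled out, the comparison with \cite[Proposition 2.11]{DW} and the Margolis homology count force the splitting.
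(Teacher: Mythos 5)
Your proposal is correct and takes essentially the same approach as the paper's proof: match each $A_1$-summand to the corresponding $E_1$-summand of \cite[Proposition 2.11]{DW}, verify that the $Q_0$- and $Q_1$-Margolis homology of the constructed submodule fills out that of $H^*K_2$, and conclude by Wall's theorem that the quotient is free and hence the sequence splits. You are somewhat more explicit than the paper about injectivity of the inclusion (the ``collisions'' issue), which the paper leaves implicit in its correspondence with \cite{DW}.
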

Theorem \ref{splt} is obtained from \cite[Proposition 2.11]{DW} by modifying the $E_1$ summands (where necessary) to make them $A_1$ modules that still retain the same $Q_1$ and $Q_0$ homologies.  
\begin{proof} The correspondence with \cite[Proposition 2.11]{DW} is $R\leftrightarrow \bigoplus M_k\L_{k-1}$, $S=NR$, $\langle u_2^2\rangle\leftrightarrow U$, and $P[u_2^2]\leftrightarrow P[y_1^2]\ot (\zt\oplus U)$. The $Q_0$- and $Q_1$-homology classes correspond and fill out the $Q_i$-homology of $H^*K_2$. The quotient of $H^*K_2$ by this large submodule is $A_1$-free by Wall's theorem.\end{proof}

The $E_2$ page is obtained by applying $\ext_{A_1}(-,\zt)$ to the summands of Theorem \ref{splt}. Earlier in this section, we have done that for the summands involving $M_k\L_{k-1}$. 
The others are small modules whose Ext is easily seen to be as in Figure \ref{4ext}.

\bigskip
\begin{minipage}{6in}
\begin{fig}\label{4ext}

{\bf $\ext_{A_1}(-,\zt)$.}

\begin{center}

\begin{\tz}[scale=.32]
\draw (-1,0) -- (6,0);
\draw (2,2) -- (0,0) -- (0,7);
\draw (4,3) -- (4,7);
\node at (6,5) {$\cdot$};
\node at (5.6,4.6) {$\cdot$};
\node at (6.4,5.4) {$\cdot$};
\draw (8,0) -- (17.5,0);
\node at (9,0) {\lb};
\draw (11,1) -- (11,7);
\draw (17,4) -- (15,2) -- (15,7);
%\node at (18,6) {$\udots$};
\draw (20,0) -- (31,0);
\draw (21,0) -- (21,7);
\draw (27,2) -- (25,0) -- (25,7);
\draw (29,3) -- (29,7);
%\node at (30,6) {$\udots$}
\draw (33,0) -- (45,0);
\draw (34,0) -- (34,7);
\node at (36,0) {\lb};
\draw (38,1) -- (38,7);
\draw (44,4) -- (42,2) -- (42,7);
\node at (0,-.6) {$8$};
\node at (4,-.6) {$12$};
\node at (9,-.6) {$2$};
\node at (11,-.6) {$4$};
\node at (15,-.6) {$8$};
\node at (21,-.6) {$5$};
\node at (25,-.6) {$9$};
\node at (34,-.6) {$9$};
\node at (38,-.6) {$13$};
\node at (42,-.6) {$17$};
\node at (2,-1.6) {$\langle y_1^4\rangle$};
\node at (12,-1.6) {$U$};
\node at (24,-1.6) {$N$};
\node at (37,-1.6) {$NU$};
\node at (17.4,5) {$\cdot$};
\node at (17.8, 5.4) {$\cdot$};
\node at (18.2,5.8) {$\cdot$};
\node at (29.4,5) {$\cdot$};
\node at (29.8,5.4) {$\cdot$};
\node at (30.2,5.8) {$\cdot$};
\node at (44,6) {$\cdot$};
\node at (44.4,6.4) {$\cdot$};
\node at (44.8,6.8) {$\cdot$};

\end{\tz}
\end{center}
\end{fig}
\end{minipage}
\bigskip

\ni Here $NU$ means the reduced summand of the $A_1$-module $N\ot U$.

\section{$ko$-cohomology and duality}\label{cohsec}
Our main focus is on $ko_*(K_2)$, in part because of its relationship with Spin cobordism. In this short section, we explain briefly how we compute $ko^*(K_2)$ and  the duality between it and $ko_*(K_2)$.

The Adams spectral sequence for $ko^{-*}(K_2)$ is obtained by applying $\ext_{A_1}(\zt,-)$ to the same $A_1$-modules used for $ko_*(K_2)$, with corresponding differentials.  As we did for $ku$ in \cite{DW}, we display the $ko$-cohomology groups increasing from right to left. 

In Figure \ref{Mcoh}, we show the beginning of the charts for $\ext_{A_1}(\zt,M_k)$ for $k=5,6,7$. This should be enough to suggest the entire pattern. These charts are the analogue of those in Figure \ref{Mcharts}. They can be easily obtained from Figure \ref{Mkpic}.

\bigskip
\begin{minipage}{6in}
\begin{fig}\label{Mcoh}

{\bf $\ext_{A_1}(\zt,\Sigma^{-2^k}M_k)$}

\begin{center}

\begin{\tz}[scale=.4]
\draw (-.5,0) -- (6.5,0);
\draw (8.5,0) -- (19.5,0);
\draw (21.5,0) -- (33.5,0);
\node at (0,0) {\lb};
\node at (2,1) {\lb};
\node at (3,2) {\lb};
\node at (4,3) {\lb};
\node at (4,2) {\lb};
\node at (4,1) {\lb};
\node at (5,2) {\lb};
\node at (6,3) {\lb};
\node at (9,0) {\lb};
\node at (13,0) {\lb};
\node at (13,1) {\lb};
\node at (15,2) {\lb};
\node at (16,3) {\lb};
\node at (17,4) {\lb};
\node at (17,3) {\lb};
\node at (17,2) {\lb};
\node at (17,1) {\lb};
\node at (18,2) {\lb};
\node at (19,3) {\lb};
\node at (22,0) {\lb};
\node at (23,1) {\lb};
\node at (23,0) {\lb};
\node at (27,0) {\lb};
\node at (27,1) {\lb};
\node at (27,2) {\lb};
\node at (29,3) {\lb};
\node at (30,4) {\lb};
\node at (31,5) {\lb};
\node at (31,4) {\lb};
\node at (31,3) {\lb};
\node at (31,2) {\lb};
\node at (31,1) {\lb};
\node at (32,2) {\lb};
\node at (33,3) {\lb};
\draw (2,1) -- (4,3) -- (4,1) -- (6,3);
\draw (13,0) -- (13,1);
\draw (15,2) -- (17,4) -- (17,1) -- (19,3);
\draw (22,0) -- (23,1) -- (23,0);
\draw (27,0) -- (27,2);
\draw (29,3) -- (31,5) -- (31,1) -- (33,3);
\node at (0,-.6) {$4$};
\node at (4,-.6) {$0$};
\node at (9,-.6) {$8$};
\node at (17,-.6) {$0$};
\node at (13,-.6) {$4$};
\node at (22,-.6) {$9$};
\node at (27,-.6) {$4$};
\node at (31,-.6) {$0$};
\node at (3,-1.6) {$k=5$};
\node at (14,-1.6) {$k=6$};
\node at (28,-1.6) {$k=7$};

\end{\tz}
\end{center}
\end{fig}
\end{minipage}
\bigskip

The analogue of Propositions \ref{Mzprop} and \ref{Uext} is as follows. It is proved using the exact sequences derived in Section \ref{extsec}.
\begin{prop} \label{cohshift} (a). For distinct $j_i\ge k$, $\ext_{A_1}(\zt,M_kz_{j_1}\cdots z_{j_r})$ is formed from\linebreak $\ext_{A_1}(\zt,\Sigma^{2^{{j_1}+2}+\cdots +2^{{j_r}+2}}M_k)$ by increasing filtrations by $r$ and extending to the left by $v_1^4$-periodicity.

(b). If $M$ is a $Q_0$-free $A_1$-module, then $\ext_{A_1}(\zt,U\ot M)$ is formed from $\ext_{A_1}(\zt,M)$
by increasing fitrations by 2 and extending to the left using $v_1^4$-periodicity.
\end{prop}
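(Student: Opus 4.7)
The plan is to repeat the proofs of Propositions \ref{Mzprop} and \ref{Uext} with the functor $\ext_{A_1}(\zt,-)$ in place of $\ext_{A_1}(-,\zt)$, applied to the same short exact sequences built in Section \ref{extsec}. The one new ingredient is that since $A_1$ is a Frobenius (hence self-injective) Hopf algebra, any free $A_1$-module $F$ has $\ext_{A_1}^{s,t}(\zt,F)=0$ for $s\ge 1$ as well. Consequently, whenever the middle term of one of our SESes is free, the long exact sequence in $\ext_{A_1}(\zt,-)$ produces a connecting isomorphism in positive filtrations that shifts $s$ by $\pm 1$ and $t$ by a controlled amount.

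For part (a), apply $\ext_{A_1}(\zt,-)$ to the SES (\ref{s}) of Section \ref{extsec},
$$0\to M_k\ot G_j\to M_k\ot P_j\to \Sigma^{2^{j+2}-1}M_k\to 0,$$
whose middle term $M_k\ot P_j$ is free by Wall's theorem. The connecting map gives an isomorphism $\ext_{A_1}^{s-1,t}(\zt,\Sigma^{2^{j+2}-1}M_k)\cong\ext_{A_1}^{s,t}(\zt,M_k\ot G_j)$ for $s\ge 2$, and since $M_k\ot G_j=M_kz_j\oplus(\text{free})$ with the free summand carrying no $\ext_{A_1}^{\ge 1}(\zt,-)$, the same isomorphism holds with $M_kz_j$ in place of $M_k\ot G_j$. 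Rewriting the source as $\ext_{A_1}^{s-1,t+1}(\zt,\Sigma^{2^{j+2}}M_k)$ and iterating once per factor $z_{j_i}$ (the distinct-$j_i$ hypothesis ensures the $G_{j_i}$-tensor steps can be built independently) yields the claimed filtration shift of $r$, with the accompanying shift in $t-s$ absorbed by the $v_1^4$-periodicity of the $\ext_{A_1}(\zt,M_k)$ chart (see \cite[Theorem 5.1]{JFA}).

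For part (b), apply the same strategy to the two SESes from the proof of Proposition \ref{Uext}, tensored with $M$:
$$0\to G\ot M\to F\ot M\to U\ot M\to 0\quad\text{and}\quad 0\to\Sigma^{10}M\to P\ot M\to G\ot M\to 0.$$
Here $F\ot M$ is free because $F$ is free over the Hopf algebra $A_1$, and $P\ot M$ is free by Wall's theorem (since $H_*(P;Q_1)=0$ and $M$ is $Q_0$-free, forcing $H_*(P\ot M;Q_i)=0$ for $i=0,1$). Two successive applications of the connecting-map isomorphism then identify $\ext_{A_1}(\zt,U\ot M)$ with $\ext_{A_1}(\zt,\Sigma^{10}M)$ up to a net shift in $s$ of $\pm 2$. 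Passing to the reduced summand $UM$ inside $U\ot M$ (the discarded free summand contributes nothing in positive Ext filtration), and combining the internal-degree shift $\ext_{A_1}^{s,t}(\zt,\Sigma^{10}M)=\ext_{A_1}^{s,t-10}(\zt,M)$ with the $v_1^4$-periodicity of $\ext_{A_1}(\zt,M)$, yields the stated description.

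The main obstacle is the low-filtration range. The connecting homomorphism is forced to be an isomorphism only once $s$ is large enough that both $\ext_{A_1}^{s}$ and $\ext_{A_1}^{s-1}$ of the free middle term vanish, so the naive SES argument misses the bottom filtrations; this is precisely where ``extending to the left using $v_1^4$-periodicity'' enters, since the $v_1^4$-periodicity of $\ext_{A_1}(\zt,N)$ for $Q_0$-free $N$ (Adams' theorem) propagates the identification from the filtrations where it is visible into the low-filtration region. A secondary check is that the reduced-summand passages from $M_k\ot G_j$ to $M_kz_j$ and from $U\ot M$ to $UM$ discard only free summands, whose $\ext_{A_1}^{\ge 1}(\zt,-)$ vanishes.
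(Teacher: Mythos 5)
Your proposal is correct and is exactly the argument the paper has in mind — its own ``proof'' of Proposition~\ref{cohshift} is the single sentence ``It is proved using the exact sequences derived in Section~\ref{extsec},'' so you have simply supplied the details. You correctly apply $\ext_{A_1}(\zt,-)$ to the SES~(\ref{s}) and to the two SESes from Proposition~\ref{Uext}, observe that the middle terms are free (by Wall's theorem or, for $F\ot M$, by the Hopf-algebra fact that $A_1\ot M$ is always free), invoke self-injectivity of $A_1$ to kill $\ext^{\ge 1}(\zt,\text{free})$, chain the resulting connecting isomorphisms, discard free summands when passing to reduced modules, and use $v_1^4$-periodicity to propagate the identification into the low-filtration range that the connecting maps alone do not reach. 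The only soft spot is the phrase ``net shift in $s$ of $\pm 2$'': you should commit to the sign by tracking the connecting homomorphism (which raises $s$ by one on the $A$-side of $0\to A\to B\to C\to 0$), giving $\ext^{s,t}_{A_1}(\zt,U\ot M)\cong\ext^{s+2,t}_{A_1}(\zt,\Sigma^{10}M)$ for $s\ge 1$, before bringing in periodicity and the internal suspension to recast this as the stated filtration increase by~$2$.
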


Analogously to \cite[Theorem 1.16]{DW}, we have
 the following remarkable duality result, where the group on the right hand side is the Pontryagin dual.
\begin{thm}\label{dualthm} There is an isomorphism of $ko_*$-modules $ko_*(K_2)\approx(ko^{*+6}K_2)^\vee$.\end{thm}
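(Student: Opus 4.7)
The plan is to deduce the duality from an $A_1$-module level duality applied summand-by-summand to the splitting of Theorem \ref{splt}. The essential structural input is that $A_1$ is a Poincar\'e duality Hopf algebra of total dimension $8$ with fundamental class in degree $6$. For any bounded finite-type $A_1$-module $M$, this provides a natural Pontryagin-type isomorphism
\[ \ext_{A_1}^{s,t}(M,\zt)^\vee \iso \ext_{A_1}^{s,6-t}(\zt, M^\vee), \]
where $M^\vee$ denotes the graded vector-space dual. This is the analogue of Tate/local duality for a finite-dimensional cocommutative Hopf algebra, and it is the source of the shift by $6$ in the statement.

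The next step is to verify, for each summand $S$ appearing in Theorem \ref{splt}, a self-duality of the form $S^\vee \iso \Sigma^{a(S)} S$ with a shift $a(S)$ that, when combined with the $6$-shift from the duality above, recovers the uniform shift in the theorem. Inspecting Figure \ref{Mkpic}, each of the four module-types for $\Sigma^{-2^k}M_k$ is manifestly symmetric under reflection through its midpoint, so the $M_k$ summand is self-dual up to a shift determined by its top and bottom gradings. The Joker $U$ is self-dual up to a computable shift, and the modules $N$, $NU$, and the various $UM_k z_J$ are handled in the same way; the Ext-shift formulas of Propositions \ref{Mzprop}, \ref{Uext}, and \ref{cohshift} transport the self-duality across the tensor products with $U$, $N$, and the exterior factors $\Lambda_{k-1}$. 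The infinite polynomial factor $P[y_1^2]$ is handled via $v_1^4$-periodicity: the class $y_1^4$ lies in degree $8$ and, in positive Ext-filtration, corresponds to multiplication by $v_1^4\in \ext_{A_1}^{4,12}(\zt,\zt)$, so Pontryagin duality interchanges the positive and negative powers of this periodicity, yielding an honest $ko_*$-module isomorphism.

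Having matched $E_2$-pages in a way compatible with Pontryagin duality, the duality then propagates to $E_\infty$ and to the abutments. This requires compatibility of differentials in the two spectral sequences; as indicated in the introduction, the $ko^*(K_2)$-differentials in the sequel are deduced dualistically from those of $ko_*(K_2)$, exactly as needed here. Each group $ko_n(K_2)$ is a finite $2$-group (each $E_\infty^{s,n+s}$ is finite and $v_1^4$-periodicity confines nonzero contributions to a bounded strip), so Pontryagin duality behaves as expected and is compatible with the $ko_*$-module structure coming from the $P[y_1^2]$-factor.

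The main obstacle is bookkeeping the degree shifts. The summands have widely varying widths in both $t$ and $s$ (the modules $M_k$ grow with $k$, and each $z_j$, each $U$, and each $N$ contributes its own shift), and showing that $S^\vee$ uniformly matches a $\Sigma^{\bullet}S$ that produces the total shift $6$ after duality requires careful use of the four-case structure of $M_k$ in Figure \ref{Mkpic} in combination with the shift formulas. A secondary subtlety is verifying that the resulting additive isomorphism is actually $ko_*$-linear, which reduces to checking that the $v_1^4$-action on both sides is implemented by the same $y_1^4$-multiplication in the $P[y_1^2]$-factor.
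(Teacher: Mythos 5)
Your approach is fundamentally different from the paper's, and it does not work. The paper's proof is one line: it invokes the Brown--Comenetz duality result of Mahowald and Rezk \cite[Corollary 9.3]{MR}, carried over to $ko$ following the $ku$ argument of Davis and Greenlees \cite{DG}. That is a statement about the spectrum $ko$ itself (Gorenstein/Anderson self-duality), not about $A_1$-module charts. The paper even warns against your strategy explicitly, in the sentence immediately after the theorem: ``The subtlety of the result is suggested by the observation that there is nothing like it for the $E_2$ pages.'' Your proposal is built precisely on an asserted $E_2$-page duality.

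Two concrete steps in your argument fail. First, the claimed ``Pontryagin-type isomorphism'' $\ext_{A_1}^{s,t}(M,\zt)^\vee \cong \ext_{A_1}^{s,6-t}(\zt,M^\vee)$ with an internal shift by $6$ is not a theorem. The standard finite-type duality over a field gives $\ext_{A_1}^{s,t}(M,\zt)^\vee \cong \ext_{A_1}^{s,t}(\zt,M)$ with \emph{no} internal shift; the Poincar\'e self-duality of $A_1$ (socle in degree $6$) relates projectives to injectives but does not insert a degree shift into $\ext$ in the way you claim. Second, the purported self-duality of the summands $\Sigma^{-2^k}M_k$ ``by symmetry of Figure~\ref{Mkpic}'' is false: the paper itself identifies $\Sigma^{-2^{4\ell+4}}M_{4\ell+4}$ with $H^*\R P^{8\ell+2}$, and already $H^*\R P^4$ is not isomorphic to a suspension of its own dual as an $A_1$-module (the original has $\sq^2$ from degree $2$ to $4$, while the shifted dual has $\sq^2$ from degree $1$ to $3$). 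The four module types in Figure~\ref{Mkpic} have distinct left and right ends and are not reflection-symmetric. Finally, even granting an $E_2$-page match, the passage to $E_\infty$ and to the abutment would require exactly the kind of spectrum-level duality statement the paper cites; you would be assuming what you are trying to prove. The shift by $6$ in the theorem is a reflection of the Gorenstein shift of the spectrum $ko$, not a degree shift of the Hopf algebra $A_1$.
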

This is deduced from \cite[Corollary 9.3]{MR} similarly to the $ku$ proof in \cite{DG}.
The subtlety of the result is suggested by the observation that there is nothing like it for the $E_2$ pages. We anticipate illustrating it in subsequent work in which differentials and extensions are determined.

\def\line{\rule{.6in}{.6pt}}

\end{document}